\newtheorem{thm}{Theorem}
\newtheorem{prop}[thm]{Proposition}
\theoremstyle{definition}
\newcommand{\ve}{\varepsilon}
\DeclareMathOperator{\disc}{disc}
\begin{document}

\begin{abstract}
It is well-known that for every $N \geq 1$ and $d \geq 1$ there exist point sets $x_1, \dots, x_N \in [0,1]^d$ whose discrepancy with respect to the Lebesgue measure is of order at most $(\log N)^{d-1} N^{-1}$. In a more general setting, the first author proved together with Josef Dick that for any normalized measure $\mu$ on $[0,1]^d$ there exist points $x_1, \dots, x_N$ whose discrepancy with respect to $\mu$ is of order at most $(\log N)^{(3d+1)/2} N^{-1}$. The proof used methods from combinatorial mathematics, and in particular a result of Banaszczyk on balancings of vectors. In the present note we use a version of the so-called transference principle together with recent results on the discrepancy of red-blue colorings to show that for any $\mu$ there even exist points having discrepancy of order at most $(\log N)^{d-\frac12} N^{-1}$, which is almost as good as the discrepancy bound in the case of the Lebesgue measure.
\end{abstract}

\title[{Tusn\'ady's problem, transference, low-discrepancy points}]{Tusn\'ady's problem, the transference principle, and non-uniform QMC sampling}

\author{Christoph Aistleitner} 
\address{Christoph Aistleitner, Institute of Analysis and Number Theory, TU Graz, Austria}
\email{christoph.aistleitner@jku.at}

\author{Dmitriy Bilyk} 
\address{Dmitriy Bilyk, School of Mathematics, University of Minnesota, USA}
\email{dbilyk@math.umn.edu}

\author{Aleksandar Nikolov} 
\address{Aleksandar Nikolov, Department of Computer Science, University of Toronto}
\email{anikolov@cs.toronto.edu}

\thanks{The first author is supported by the Austrian Science Fund (FWF), project Y-901.}
\thanks{The third author is supported by an NSERC Discovery Grant.}

\maketitle

\section{Introduction and statement of results}

Many problems from applied mathematics require the calculation or estimation of the expected value of a function depending on several random variables; such problems are for example the calculation of the fair price of a financial derivative and the calculation of the expected loss of an insurance risk. The expected value $\mathbb{E}\big(g(Y^{(1)}, \dots, Y^{(d)} )\big)$ can be written as
\begin{equation} \label{ABNgenint}
\int_{\mathbb{R}^d} g \big(y^{(1)}, \dots, y^{(d)} \big) ~d\nu\big(y^{(1)}, \dots, y^{(d)}\big),
\end{equation}
where $\nu$ is an appropriate probability measure describing the joint distribution of the random variables $Y^{(1)}, \dots, Y^{(d)}$. Since a precise calculation of the value of such an integral is usually not possible, one looks for a numerical approximation instead. Two numerical methods for such problems are the \emph{Monte Carlo method} (MC, using random sampling points) and the \emph{Quasi-Monte Carlo method} (QMC, using cleverly chosen deterministic sampling points), where the QMC method is often preferred due to a faster convergence rate and deterministic error bounds. However, in the QMC literature it is often assumed that the problem asking for the value of \eqref{ABNgenint} has at the outset already been transformed into the problem asking for the value of 
\begin{equation} \label{ABNspecint}
\int_{[0,1]^d} f \big(x^{(1)}, \dots, x^{(d)} \big) ~dx^{(1)}\cdots x^{(d)}.
\end{equation}
Thus it is assumed that the integration domain is shrunk from $\mathbb{R}^d$ to $[0,1]^d$, and that the integration measure is changed from $\nu$ to the Lebesgue measure. In principle, such a transformation always exists -- this is similar to the way how general multivariate random sampling is reduced to sampling from the multivariate uniform distribution, using for example the Rosenblatt transform \cite{ABNrosen} (which employs sequential conditional inverse functions). The shrinking procedure is less critical, albeit also non-trivial\footnote{One may wish to avoid this shrinking, and carry out QMC integration directly on $\mathbb{R}^d$ instead. One way of doing so is to ``lift'' a set of sampling points from $[0,1]^d$ to $\mathbb{R}^d$ rather than shrinking the domain of the function. See for example \cite{ABNdilp} and the references there.}; however, the change from a general measure to the uniform measure is highly problematic, in particular because QMC error bounds depend strongly on the regularity of the integrand. Note that the change from $\nu$ to the Lebesgue measure induces a transformation of the original function $g$ to a new function $f$, and this transformation may totally ruin all ``nice'' properties of the initial function $g$ such as the existence of derivatives or the property of having bounded variation. This is not the place to discuss this topic in detail; we just note that the main problem is not that each of $Y^{(1)}, \dots, Y^{(d)}$ may have a non-uniform marginal distribution, but rather that there may be a strong dependence in the joint distribution of these random variables (which by Sklar's theorem may be encoded in a so-called \emph{copula} -- see \cite{ABNnelsen} for details). We refer to \cite{ABNcambou} for a discussion of these issues from a practitioner's point of view.\\

There are two possible ways to deal with the problems mentioned in the previous section, which can be seen as two sides of the same coin. Either one tries to transform the points of a classical QMC point set in such a way that they can be used for integration with respect to a different, general measure, and such that one obtains a bound for the integration error in terms of the discrepancy (with respect to the uniform measure) of the original sequence -- this is the approach of Hlawka and M\"uck \cite{ABNhla1,ABNhla2}, which has been taken up by several authors. On the other hand, one may try to sample QMC points directly in such a way that they have small discrepancy with respect to a given measure $\mu$, and use error bounds which apply in this situation. This is the approach discussed in \cite{ABNai1,ABNai2}, where it is shown that both key ingredients for QMC integration are given also in the setting of a general measure $\mu$: there exist (almost) low-discrepancy point sets, and there exists a Koksma--Hlawka inequality giving bounds for the integration error in terms of the discrepancy (with respect to $\mu$) of the sampling points and the degree of regularity of the integrand function. More precisely, let $\mu$ be a normalized Borel measure on $[0,1]^d$, and let 
\begin{equation}\label{ABNd1}
D_N^*(x_1, \dots, x_N; \mu) = \sup_{A \in \mathcal{A}} \left| \frac{1}{N} \sum_{n=1}^N \mathds{1}_A (x_n) - \mu(A) \right|
\end{equation}
be the star-discrepancy of $x_1, \dots, x_N \in [0,1]^d$ with respect to $\mu$; in this definition $\mathds{1}_A$ denotes the indicator function of $A$, and the supremum is extended over the class $\mathcal{A}$ of all axis-parallel boxes contained in $[0,1]^d$ which have one vertex at the origin. Then in \cite{ABNai1} it is shown that for arbitrary $\mu$ there exist points $x_1, \dots, x_N \in [0,1]^d$ such that
\begin{equation} \label{ABNdnmu}
D_N^*(x_1, \dots, x_N; \mu) \leq 63 \sqrt{d} \frac{(2 + \log_2 N)^{(3d+1)/2}}{N}.
\end{equation}
This improved an earlier result of Beck \cite{ABNbeck}, where the exponent of the logarithmic term was $2d$. Note the amazing fact that the right-hand side of \eqref{ABNdnmu} does not depend on $\mu$ (whereas the choice of the points $x_1, \dots, x_N$ clearly does). The estimate in \eqref{ABNdnmu} should be compared with corresponding results for the case of the uniform measure, where it is known that there exist point sets whose discrepancy is of order at most $(\log N)^{d-1} N^{-1}$ (so-called \emph{low-discrepancy point sets}).\footnote{While preparing the final version of this manuscript we learned that already in 1989 J\'ozsef Beck \cite[Theorem 1.2]{ABN_ober} proved a version of \eqref{ABNdnmu} with the stronger upper bound $\mathcal{O} \left(N^{-1} (\log N)^{d+2} \right)$. (However, the implied constant was not specified in his result.)}\\

A Koksma--Hlawka inequality for general measures was first proved by G\"otz \cite{ABNgotz} (see also \cite{ABNai2}). For any points $x_1, \dots, x_N$, any normalized measure $\mu$ on $[0,1]^d$ and any $d$-variate function $f$ whose variation $\textup{Var}~f$ on $[0,1]^d$ (in the sense of Hardy--Krause)  is bounded, we have
\begin{equation} \label{ABNKH}
\left|\int_{[0,1]^d} f(x) d\mu (x)  - \frac{1}{N} \sum_{n=1}^N f(x_n) \right| \leq D_N^*(x_1, \dots, x_N; \mu) ~\textup{Var}~f. 
\end{equation}
This is a perfect analogue of the original Koksma--Hlawka inequality for the uniform measure. Combining \eqref{ABNdnmu} and \eqref{ABNKH} we see that in principle QMC integration is possible for the numerical approximation of integrals of the form
$$
\int_{[0,1]^d} f \big(x^{(1)}, \dots, x^{(d)} \big) d\mu(x^{(1)}\dots x^{(d)}),
$$
and that the convergence rate is almost as good as in the classical setting for the uniform measure. However, it should be noted that for the case of the uniform measure many explicit constructions of low-discrepancy point sets are known (see \cite{ABNdpd}), whereas the proof of \eqref{ABNdnmu} is a pure existence result and it is totally unclear how (and if) such point sets can be constructed with reasonable effort.\\

The purpose of the present note is to show that for any $\mu$ there actually exist points $x_1, \dots, x_N$ whose discrepancy with respect to $\mu$ is of order at most $(\log N)^{d-\frac12}N^{-1}$; this is quite remarkable, since it exceeds the corresponding bound for the case of the uniform measure only by a factor $(\log N)^{\frac12}$. 

\begin{thm} \label{ABNth1}
For every $d \geq 1$ there exists a constant $c_d$ (depending only on $d$) such that the following holds. For every $N \geq 2$ and every normalized Borel measure on $[0,1]^d$ there exits points $x_1, \dots, x_N \in [0,1]^d$ such that
$$
D_N^*(x_1, \dots, x_N; \mu) \leq c_d \frac{(\log N)^{d-\frac12}}{N}.
$$
\end{thm}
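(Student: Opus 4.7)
My plan is to use a \emph{transference} argument: first produce a large auxiliary point set that approximates $\mu$ well by random sampling, and then iteratively halve it by means of low-discrepancy $\pm1$-colorings for the system $\mathcal{A}$ of anchored axis-parallel boxes. The final $N$-point set is obtained after $\log_2(M/N)$ halving steps, where $M$ is the size of the initial sample, and the total error is controlled by a geometric series whose dominant term sits at the last step.

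Concretely, I would begin by sampling $M = 2^k N$ points $y_1,\dots,y_M$ i.i.d.\ from $\mu$ with $M \asymp N^2$. Since $\mathcal{A}$ has VC-dimension $O(d)$, a standard uniform deviation bound for empirical processes yields, with positive probability,
$$
D_M^*(y_1,\dots,y_M;\mu) \le C_d \sqrt{\frac{\log M}{M}} = O\!\left(\frac{\sqrt{\log N}}{N}\right),
$$
which is comfortably below the target $(\log N)^{d-1/2}/N$. Fix such a realization and put $P_0 := \{y_1,\dots,y_M\}$.

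Next I would invoke the recent progress on Tusn\'ady's problem---the ``recent results on the discrepancy of red-blue colorings'' referenced in the abstract. It asserts that for any multiset $P$ of $m$ points in $[0,1]^d$ there exists a balanced coloring $\chi : P \to \{-1,+1\}$ with $\sum_{x\in P}\chi(x)=0$ satisfying
$$
\Big|\sum_{x \in P} \chi(x)\,\mathds{1}_A(x)\Big| \le c_d \,(\log m)^{d-1/2} \quad \text{for every } A\in\mathcal{A}.
$$
Iterating this, I define $P_{i+1}$ to be the $+1$-class of the coloring of $P_i$ for $i=0,1,\dots,k-1$, so that $|P_{i+1}|=M/2^{i+1}$. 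For every $A \in \mathcal{A}$,
$$
\left|\frac{|P_{i+1}\cap A|}{|P_{i+1}|} - \frac{|P_i\cap A|}{|P_i|}\right| \le \frac{c_d(\log|P_i|)^{d-1/2}}{|P_{i+1}|} \le \frac{2c_d\,2^i(\log M)^{d-1/2}}{M}.
$$
Summing the telescoping geometric series over $i$ and using $2^k = M/N$, I would obtain $D_N^*(P_k;P_0) \le 4c_d(\log M)^{d-1/2}/N$. Combining this with the sampling error via the triangle inequality $D_N^*(P_k;\mu) \le D_N^*(P_k;P_0) + D_M^*(P_0;\mu)$ and using $\log M \le 2\log N$ then yields the claim, with $P_k$ as the desired set of $N$ points.

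The main obstacle is the Tusn\'ady coloring bound with exponent $d - 1/2$: this is a deep theorem in combinatorial discrepancy (descended from Banaszczyk's vector-balancing estimate), and the exponent it produces is precisely what dictates the exponent $d-1/2$ in the theorem---any further improvement on Tusn\'ady's problem would automatically improve the theorem. Everything else is standard transference bookkeeping; the only minor subtleties are enforcing balanced colorings so that each halving step is exact (by adjoining the full box $[0,1]^d$ to $\mathcal{A}$ or by a trivial pairing of points) and adjusting $M$ so that $M/N$ is an integer power of $2$, both of which cost only constant factors.
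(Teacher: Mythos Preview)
Your proposal is correct and follows essentially the same route as the paper: the paper packages the iterated-halving argument as a general ``transference principle for arbitrary measures'' (requiring only an initial $\varepsilon$-approximation of $\mu$, which it obtains either from the earlier $(3d+1)/2$ bound or, exactly as you do, from VC/Glivenko--Cantelli, together with the sub-doubling condition $h(2N)\le(2-\delta)h(N)$), and then plugs in Nikolov's Tusn\'ady bound $h(N)=c_d(\log N)^{d-1/2}$. Your random-sampling initialization with $M\asymp N^2$, the telescoping over halving steps, and the balancing device via $[0,1]^d\in\mathcal{A}$ all match the paper's argument.
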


The  proof of this theorem uses a version of the so-called transference principle, which connects the combinatorial and geometric discrepancy, see Theorem \ref{ABNtransfer}. The novelty and the main observation of this paper lies in the fact that this principle is still valid for general measures $\mu$. This observation was previously made by Matou{\v{s}}ek in \cite{ABNmat2}, without providing any details, but otherwise it has been largely overlooked. In addition we shall use  new upper bounds for Tusn\'ady's problem due to the third author  (for the discussion of Tusn\'ady's problem, see \S\ref{ABNtusnady}). If one wants to refrain from the application of unpublished results,  one can use Larsen's \cite{ABNlarsen} upper bounds for Tusn\'ady's problem instead, which yield Theorem \ref{ABNth1} with the exponent $d+1/2$ instead of $d- 1/2$ of the logarithmic term (see also \cite{ABNmat}). An exposition of the connection between geometric and  combinatorial discrepancies, together with the proof of Theorem \ref{ABNth1}, is given in the following section.\\

Before turning to combinatorial discrepancy, we want to make several remarks concerning Theorem \ref{ABNth1}. Firstly, whereas the conclusion of Theorem \ref{ABNth1} is stated for finite point sets, one can use a well-known method to find an infinite sequence $(x_n)_{n \geq 1}$ whose discrepancy is of order at most $(\log N)^{d+1/2} N^{-1}$ for all $N \geq 1$. A proof can be modeled after the proof of Theorem 2 in \cite{ABNai1}.\\

Secondly, while the upper bound in Theorem \ref{ABNth1} is already very close to the corresponding upper bound in the case of the classical discrepancy for the uniform measure, there is still a gap. One wonders whether this gap is a consequence of a deficiency of our method of proof, or whether the discrepancy bound in the case of general measures really has to be larger than that in the case of the uniform measure. In other words, we have the following open problem, which we state in a slightly sloppy formulation.\\

{\bf Open problem:}  
Is the largest upper bound for the smallest possible discrepancy the one for the case of Lebesgue measure? In other words, 
is there any measure which is more difficult to approximate by finite atomic measures with equal weights than Lebesgue measure?\\

We think that this is a problem of significant theoretical interest. Note, however, that even in the classical case of the Lebesgue measure the problem asking for the minimal order of the discrepancy of point sets is famously still open\footnote{This is known as the \emph{Great Open Problem} of discrepancy theory, see, e.g., \cite{ABNBC}, page 8, and \cite{ABNmatbook}, page 178.}; while in the upper bounds for the best known constructions the logarithmic term has exponent $d-1$, in the best known lower bound the exponent is $(d-1)/2+\varepsilon_d$ for small positive $\varepsilon_d$ (see \cite{ABNbil2} for the latter result, and \cite{ABNbil1} for a survey). It is clear that some measures are much easier to approximate than Lebesgue measure -- think of the measure having full mass at a single point, which can be trivially approximated with discrepancy zero. On the other hand, if the measure has a non-vanishing continuous component then one can carry over the orthogonal functions method of Roth \cite{ABNroth} -- this is done in \cite{ABNchen}. However, the lower bounds for the discrepancy which one can obtain in this way are the same as those for the uniform case (and not larger ones). Intuitively, it is tempting to assume that the Lebesgue measure is essentially extremal -- simply because intuition suggests that a measure which is difficult to approximate should be ``spread out everywhere'', and should be ``equally large'' throughout the cube.\\

\section{Combinatorial discrepancy  and the transference principle}
\label{ABNsect:comb-disc}

\subsection{Combinatorial discrepancy}
Let $V = \{1, \dots, n\}$ be a ground set and $\mathcal{C} = \{C_1, \dots, C_m\}$ be a system of subsets of $V$. Then the (combinatorial) discrepancy of $\mathcal{C}$ is defined as
$$
\disc \mathcal{C} = \min_{y \in \{-1,1\}^V } \disc (\mathcal{C},y),
$$
where the vector $y$ is called a \emph{(red-blue) coloring} of $V$ and 
$$
\disc (\mathcal{C} ,y) = \max_{i \in \{1, \dots, m\}} \left| \sum_{j \in C_i} y_j \right|
$$
is the discrepancy of the coloring $y$. We may visualize the entries of the vector $y$ as representing two different colors (usually red and blue). Then $\disc (\mathcal{C},y)$ is the maximal difference between the number of red and blue elements of $V$ contained in a test set $C_i$, and $\disc \mathcal{C}$ is the minimal value of $\disc (\mathcal{C},y)$ over all possible colorings $y$. For more information on this combinatorial notion of discrepancy, see \cite{ABNchaz,ABNmatbook}. 

We will only be concerned with  a geometric variation of this notion, i.e. the case when $V$ is a  point set in $[0,1]^d$, and when the set system $\mathcal{C}$ is the collection of all sets of the form ${{G}} \cap V$,  where $G\in \mathcal G$ and ${\mathcal{G}}$ is some geometric  collection of subsets of $[0,1]^d$: standard choices include, e.g., balls, convex sets, boxes (axis-parallel or arbitrarily rotated) etc.  Let $\disc(N,{\mathcal{G}},d)$ denote the maximal possible discrepancy in this setting; that is, set
$$
\disc(N,\mathcal{G},d) = \max_{P} (\disc \mathcal{C}),
$$
where the maximum is taken over all sets $P$ of $N$ points in $[0,1]^d$ and where $\mathcal{C} = \mathcal{G} \vert_P = \left\{ {G} \cap P :\, G \in \mathcal G \right\}$.  \\

\subsection{Tusn\'{a}dy's problem}\label{ABNtusnady} We denote by  $\mathcal{A}$ (as in the previous section)  the class of all axis-parallel boxes having one vertex at the origin. The problem of finding sharp   upper and lower bounds for $\disc(N,\mathcal{A},d)$ as a function of $N$ and $d$ is known as  \emph{Tusn\'ady's problem}. For the history and background of the problem, see \cite{ABNbeck2,ABNmat2,ABNmat}.\\

We will use the following result, which has been recently announced by the third author~\cite{ABNNikolov17}.  A slightly weaker result~\cite{ABNBansalG16} (with exponent $d$) by Bansal and Garg has been presented at MCQMC 2016 (and is also still unpublished). A yet weaker, but already published result (with exponent $d+1/2$ instead of $d-1/2$) is contained  in \cite{ABNlarsen}.

\begin{prop} \label{ABNprop}
For every $d \geq 1$ there exists a constant $c_d$ (depending only on $d$) such that for all $N \geq 2$
$$
\disc(N,\mathcal{A},d) \leq c_d (\log N)^{d-\frac12}.
$$ 
\end{prop}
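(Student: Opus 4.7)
The plan is to encode the set system as a matrix and apply matrix-based discrepancy techniques built around the Banaszczyk vector-balancing theorem. Given $N$ points $P \subset [0,1]^d$, one can relabel each coordinate axis so that the points lie on the grid $\{1/N, 2/N, \ldots, 1\}^d$ without changing incidences with anchored boxes. The number of combinatorially distinct anchored boxes is then at most $M = (N+1)^d$, so the incidence matrix $A \in \{0,1\}^{M \times N}$ satisfies $\disc(\mathcal{A}\vert_P) = \min_{\chi \in \{\pm 1\}^N} \|A\chi\|_\infty$.

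The main tool I would invoke is the factorization-norm bound
\[
\min_{\chi \in \{\pm 1\}^N} \|A\chi\|_\infty \;\leq\; C\, \gamma_2(A)\, \sqrt{\log M},
\]
where $\gamma_2(A) = \inf\{\|U\|_{2 \to \infty}\|V\|_{1 \to 2} : A = UV\}$ is the factorization norm. This is a direct consequence of Banaszczyk's theorem applied to the normalized columns of the factor $V$, with the convex body supplied by the row $\ell_2$-norms of $U$. The problem is thereby reduced to a purely linear-algebraic one: estimating $\gamma_2$ of the Tusn\'ady matrix.

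Here the product structure of anchored boxes enters: up to row and column permutations, $A$ is the $d$-fold tensor power $A_1^{\otimes d}$, where $A_1$ is the $N \times N$ lower-triangular all-ones matrix encoding anchored intervals in one dimension. An explicit factorization through the Haar basis yields $\gamma_2(A_1) = O(\sqrt{\log N})$, and by the well-known submultiplicativity of $\gamma_2$ under tensor products one obtains $\gamma_2(A) \leq \gamma_2(A_1)^d = O((\log N)^{d/2})$. Substituting into the displayed inequality already gives $\disc(N,\mathcal{A},d) = O((\log N)^{(d+1)/2})$, which recovers Larsen's bound but falls short of the target by a factor $\sqrt{\log N}$.

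The main obstacle, and the source of the sharpening to $(\log N)^{d-\frac12}$, is that this tensor-product bound on $\gamma_2$ is lossy: the $\sqrt{\log M}$ penalty in the vector-balancing step should be paid only \emph{once} overall, not once per tensor factor. To harvest this saving I would abandon the black-box $\gamma_2$ bound and run a multi-round partial-coloring scheme tailored to the Tusn\'ady matrix. In each round, Banaszczyk's theorem produces a fractional coloring that rounds a constant fraction of the variables to $\pm 1$ at a cost of $O(\sqrt{\log N})$ per test box; because of the self-similar dyadic structure, the residual matrix on the unrounded coordinates is itself a Tusn\'ady-type matrix on about half as many points. Iterating through $O(\log N)$ rounds, and arranging the per-round discrepancies on distinct dyadic scales along a chaining scheme so that their contributions form a convergent geometric series rather than a uniform sum of $\log N$ equal terms, should produce the desired $(\log N)^{d-\frac12}$. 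Quantifying this decoupling precisely is the technical heart of the argument and, I expect, is most naturally carried out within the algorithmic Gaussian-walk framework of Bansal--Dadush--Garg rather than through the static $\gamma_2$ bound above.
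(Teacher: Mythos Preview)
The paper does not prove this proposition; it is quoted from the third author's preprint \cite{ABNNikolov17} and used as a black box. There is therefore no ``paper's own proof'' to compare against. I will nevertheless comment on the content of your sketch.

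Your first half is correct and recovers Larsen's $(\log N)^{(d+1)/2}$ bound: reduce to the grid, form the incidence matrix, observe the $d$-fold tensor structure, bound $\gamma_2$ of the one-dimensional lower-triangular matrix by $O(\sqrt{\log N})$ via a Haar factorization, and apply Banaszczyk's theorem for the final $\sqrt{\log M}$ factor. This is precisely the Matou\v{s}ek--Nikolov--Talwar route~\cite{ABNmat}.

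The gap is in the sharpening. You correctly diagnose the loss, but you do not actually supply an argument that removes it, and the direction you indicate is the wrong one. Two concrete issues. First, Banaszczyk's theorem outputs a full $\{\pm 1\}$ coloring, not a ``fractional coloring that rounds a constant fraction of the variables,'' so your per-round description conflates Banaszczyk with the Beck--Spencer partial-coloring method; these are distinct tools with different quantitative behavior. Second, your proposal to carry the improvement out ``within the algorithmic Gaussian-walk framework of Bansal--Dadush--Garg'' is known \emph{not} to reach exponent $d-\tfrac12$: as this very paper remarks at the end of Section~\ref{ABNsect:comb-disc}, the Bansal--Garg algorithmic approach \cite{ABNBansalG16} yields exponent $d$, while the exponent $d-\tfrac12$ of \cite{ABNNikolov17} ``relies on an existential result of Banaszczyk~\cite{ABNBana13}, and no efficient algorithm is currently known that achieves this bound.'' The actual argument in \cite{ABNNikolov17} stays with Banaszczyk's theorem and sharpens the factorization-norm estimate itself, rather than passing to a partial-coloring or random-walk iteration. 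As written, your sketch stops exactly where the real work begins, and the path you point to leads to $(\log N)^{d}$ rather than $(\log N)^{d-\frac12}$.
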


Finally, we want to note that Tusn\'ady's problem is still unsolved;  the known (and conjecturally optimal)  lower bounds are of the  order $(\log N)^{d-1}$, see \cite{ABNmat}. As will become clear from the next subsection,  further improvements of the upper bounds for Tusn\'ady's problem would directly imply improved upper bounds in Theorem \ref{ABNth1}. Note that actually Tusn\'ady's problem also falls within the framework of ``discrepancy with respect to a general measure $\mu$''. Given $N$ points, let $\mu$ be the discrete measure that assigns mass $1/N$ to each of these points. Then Tusn\'ady's problem asks for a set $P$ of roughly $N/2$ points such that the discrepancy of $P$ with respect to $\mu$ is small -- under the additional requirement that the elements of $P$ are chosen from the original set of $N$ points. This additional requirement also explains why lower bounds for Tusn\'ady's problem do not imply lower bounds for the problem discussed in the present paper.

\subsection{Transference principle} 
It is known that upper bounds for the combinatorial discrepancy of red-blue colorings can be turned into upper bound for the smallest possible discrepancy of a point set in the unit cube. This relation is called the \emph{transference principle}.\\

For a system $\mathcal G$ of measurable subsets of $[0,1]^d$, its (unnormalized)  geometric discrepancy is defined  as  
\begin{equation}\label{ABNd2}
D_N (\mathcal G) = \inf_{P: \, \# P = N}  \sup_{G \in \mathcal{G}} \left|  \sum_{p \in P}  \mathds{1}_G (p) - N \lambda(G) \right|,
\end{equation}
where the infimum is taken over all $N$-point sets $P \subset [0,1]^d$ and $\lambda(G)$ is the Lebesgue measure of $G$. The \emph{transference principle}, roughly speaking,  says that (under some mild assumptions on the collection $\mathcal G$) {\bf{\emph{the geometric discrepancy is bounded  above by the combinatorial discrepancy,}}} i.e. $ D_N (\mathcal G) \lesssim \disc(N,\mathcal{G},d)$ with the symbol  ``$\lesssim$" interpreted loosely. Thus upper bounds on combinatorial discrepancy yield upper bounds for its geometric counterpart.  This relation, in general,  cannot be reversed: in the case when $\mathcal G$ is the collection of all convex subsets of the unit cube,  we have that $ D_N (\mathcal G)$ is of the order $N^{1-\frac{2}{d+1}}$, while $ \disc(N,\mathcal{G},d)$ is of the order $N$ as $N\rightarrow \infty$ (see, e.g., \cite{ABNmatbook}).\\

In \cite{ABNmatbook,ABNmat} it is mentioned that M.~Simmonovits attributes the idea of  this principle to Vera T. S\'os. It was used in the context of Tusn\'ady's problem by Beck \cite{ABNbeck2} in 1981, and is stated in a rather general form in \cite{ABNlovasz}. It can be found also in Matou{\v{s}}ek's book \cite[p.~20]{ABNmatbook} and in \cite{ABNmat}. In all these instances it is formulated in a version which bounds the geometric  discrepancy of point sets \emph{with respect to the Lebesgue measure}  (as defined in \eqref{ABNd2}) in terms of the combinatorial discrepancy of red-blue colorings.\\  

However, upon examination of the proof, it turns out that the argument carries over to the case of the discrepancy with respect to an arbitrary measure (the only significant requirement is that the measure allows an $\varepsilon$-approximation for arbitrary $\ve$; see below).  Similar to \eqref{ABNd1} and \eqref{ABNd2},  we define the geometric discrepancy of $\mathcal G$ with respect to a Borel measure $\mu$ as 
\begin{equation}\label{ABNd3}
D_N (\mathcal G,\mu) = \inf_{P: \, \# P = N}  D (P,\mathcal G,,\mu),
\end{equation}{ where }  
\begin{equation} D (P,\mathcal G,\mu) =   \sup_{G \in \mathcal{G}} \left|  \sum_{p \in P}  \mathds{1}_G (p) - N \mu (G) \right|.
\end{equation}
 Note that with this definition $D_N^*(x_1, \dots, x_N; \mu)$, as defined in \eqref{ABNd1}, satisfies $D_N^*(x_1, \dots, x_N; \mu) = \frac1{N} D \big( \{ x_1,\dots,x_N\}, \mathcal A, \mu \big)$. Below we state and prove the transference principle in  a rather general form  for  arbitrary measures. The statement is similar to that in \cite{ABNmatbook,ABNmat} in the case of Lebesgue measure, and the proof follows along the lines of \cite{ABNmatbook}. 

\begin{thm}[{\bf{Transference principle for general measures}}] \label{ABNtransfer}
Let $\mu$ be a Borel probability measure on $[0,1]^d$ and let  $\mathcal{G}$ be a class of Borel subsets of $[0,1]^d$ such that $[0,1]^d \in \mathcal{G}$. Suppose that  
\begin{equation}\label{ABNdecay}
D_N (\mathcal G, \mu)  =  o (N) \,\,\, \textup{ as }\,\,\, N \rightarrow \infty.
\end{equation} 
Assume furthermore that  the combinatorial discrepancy of $\mathcal{G}$ satisfies  
\begin{equation}
\disc(N,\mathcal{G},d) \leq h(N)
\end{equation}
 for some function $h$ with the property that  $h(2N) \leq (2-\delta)h(N)$ for some fixed $\delta>0$. Then for every $N$ \begin{equation}\label{ABNcdbound}
 D_N (\mathcal G, \mu) =  \mathcal{O}(h(N)),
 \end{equation}  i.e.  there exist points $x_1,\dots,x_N \in [0,1]^d$ 
 so that $ D \big( \{x_1,\dots, x_N\},\mathcal G,\mu\big) \le C h(N)$.
\end{thm}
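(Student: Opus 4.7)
My plan is to prove the bound by a halving recursion. Given a near-optimal $2N$-point configuration for $\mu$, I will use the combinatorial discrepancy hypothesis to two-color its points and extract an $N$-point subset whose $\mu$-discrepancy is at most roughly half the original plus $O(h(2N))$. Iterating this inequality and summing the resulting geometric series (which converges precisely because of the strict doubling condition $h(2N) \leq (2-\delta)h(N)$) yields the desired $\mathcal{O}(h(N))$ bound.

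In detail, fix $\varepsilon > 0$ and choose a set $P_{2N} \subset [0,1]^d$ with $|P_{2N}| = 2N$ and $D(P_{2N}, \mathcal{G}, \mu) \leq D_{2N}(\mathcal{G}, \mu) + \varepsilon$. The trace set system $\mathcal{G}|_{P_{2N}}$ has combinatorial discrepancy at most $h(2N)$ by hypothesis, so there exists a coloring $y \in \{-1,+1\}^{P_{2N}}$ with
$$\Bigl|\sum_{p \in P_{2N}} y_p \mathds{1}_G(p)\Bigr| \leq h(2N) \quad \text{for every } G \in \mathcal{G}.$$
Writing $P^+ = \{p \in P_{2N} : y_p = +1\}$ and using $2\cdot\mathds{1}_{p \in P^+} = 1 + y_p$, I get for every $G \in \mathcal{G}$
$$|P^+ \cap G| - N\mu(G) = \tfrac{1}{2}\bigl(|P_{2N} \cap G| - 2N \mu(G)\bigr) + \tfrac{1}{2}\sum_{p \in P_{2N}} y_p \mathds{1}_G(p),$$
whose absolute value is at most $\tfrac12 D(P_{2N}, \mathcal{G}, \mu) + \tfrac12 h(2N)$. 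Setting $G = [0,1]^d$, which lies in $\mathcal{G}$ by assumption, gives $\bigl||P^+| - N\bigr| \leq \tfrac12 h(2N)$; hence by removing or adjoining at most $\tfrac12 h(2N)$ points (each of which changes $|P' \cap G|$ by at most $1$) I obtain an exact $N$-point set $P'$ with $D(P', \mathcal{G}, \mu) \leq \tfrac12 D(P_{2N}, \mathcal{G}, \mu) + h(2N)$. Letting $\varepsilon \to 0$ yields the recursion
$$D_N(\mathcal{G}, \mu) \leq \tfrac12 D_{2N}(\mathcal{G}, \mu) + h(2N).$$

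Iterating this $k$ times gives
$$D_N(\mathcal{G}, \mu) \leq \frac{1}{2^k} D_{2^k N}(\mathcal{G}, \mu) + \sum_{j=1}^{k} \frac{h(2^j N)}{2^{j-1}}.$$
For the first term, note that for fixed $N$ one has $2^{-k} D_{2^k N} = N \cdot \bigl(D_{2^k N}/(2^k N)\bigr) \to 0$ as $k \to \infty$ by hypothesis \eqref{ABNdecay}. For the second term, $h(2^j N) \leq (2-\delta)^j h(N)$, so the sum becomes a convergent geometric series $2 h(N) \sum_{j \geq 1} (1-\delta/2)^j$, which is bounded by $(4/\delta) h(N)$. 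Together this gives $D_N(\mathcal{G}, \mu) \leq (4/\delta) h(N) = \mathcal{O}(h(N))$.

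The main delicate point is the passage from the unbalanced partition $\{P^+, P^-\}$ to an exact $N$-point set: one has to verify that the rebalancing contributes only $O(h(2N))$ to the discrepancy, and one crucially uses $[0,1]^d \in \mathcal{G}$ to bound $\bigl||P^+|-|P^-|\bigr|$. Everything else is bookkeeping: the strict doubling $(2-\delta) < 2$ is exactly what is needed for the geometric series to converge, and the a priori hypothesis \eqref{ABNdecay} is what initializes the recursion by killing the leading $2^{-k} D_{2^k N}$ term as $k \to \infty$.
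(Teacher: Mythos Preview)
Your proof is correct and follows essentially the same halving strategy as the paper: start from a large approximating set, two-color it using the combinatorial discrepancy bound, keep one color class, rebalance to the exact desired cardinality using $[0,1]^d \in \mathcal{G}$, and iterate so that the doubling condition $h(2N)\le(2-\delta)h(N)$ makes the accumulated errors sum geometrically. The only cosmetic difference is that you phrase the argument as an abstract recursion $D_N \le \tfrac12 D_{2N} + h(2N)$ on the infimum and then let $k\to\infty$, whereas the paper fixes $k$ at the outset (via $\varepsilon = h(N)/N$) and tracks explicit point sets $P_0,\dots,P_k$; the content is the same.
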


\begin{proof} Set $\ve = h(N)/N$, and using \eqref{ABNdecay} choose a positive integer  $k$  so large that there exists a set $P_0$ of $2^k N$ points in $[0,1]^d$ 
so that  $ \frac{D (P_0,\mathcal G,\mu)}{2^k N} \le \ve$. 
By \eqref{ABNcdbound} we can find a red-blue coloring of  the  set $P_0$  with discrepancy at most $h (2^k N)$. The difference between the total number of red and blue points  is  also at most $h (2^k N)$, since the full unit cube is an element of our class of test sets. Without loss of generality we may assume that there are no more red than blue points (otherwise switch the roles of the red and the blue points). We keep all the red points and only so many of the blue points as to make sure  that in total we have half the number of the original points, while we dispose of all the other blue points. Write $P_1$ for the new set. The cardinality of $P_1$ is $2^{k-1} N$. Furthermore, 
$$
 \frac{D (P_1,\mathcal G,\mu)}{2^{k-1} N} \le  \ve + {\frac{h(2^k N)}{2^{k-1} N}}.
$$
To see why this is the case, note that an arbitrary set $G \in \mathcal{G}$ contains between $2^k N \mu(G) - \ve 2^k N$ and $2^k N \mu(G) + \ve 2
^k N$ elements of $P_0$. Thus it contains between
$$
\frac12 \big( {2^k N \mu(G) - \ve 2^k N} - h (2^k N) \big) \quad \textrm{and} \quad \frac12 \big( {2^k N \mu(G) + \ve 2^k N} + h (2^k N) \big)
$$
red elements of $P_0$, and consequently between 
$$
\frac12 \big( {2^k N \mu(G) - \ve 2^k N} - h (2^k N) \big) \quad \textrm{and} \quad \frac12 \big( {2^k N \mu(G) + \ve 2^k N} + 2h (2^k N) \big)
$$
elements of $P_1$, where the upper bound is increased by $h(2^k N)/2$ since we had to add at most so many blue points in order to make sure that $P_1$ has the desired cardinality. 
Repeating this procedure, we obtain a point set $P_2$ of cardinality $2^{k-2} N$ whose discrepancy with respect to $\mu$ is at most
$$
\frac{D (P_2,\mathcal G,\mu)}{2^{k-2} N} \le \ve + {\frac{h(2^k N)}{2^{k-1} N}} + {\frac{h(2^{k-1} N)}{2^{k-2} N}}.
$$
We repeat this procedure over and over again, until we arrive at a point set $P_k$ which has cardinality $N$, and whose discrepancy  with respect to $\mu$ satisfies 
$$
 D(P_k, \mathcal G, \mu ) \le  \underbrace{\varepsilon N}_{=h(N)}  + \sum_{j=0}^k \frac{h (2^{k-j} N )}{2^{k-j-1}  } ~\leq~ C h (N),
$$
where we have used the condition that $h(2N) \leq (2-\delta)h(N)$. Note that the value of $C$ does not depend on the measure $\mu$. This finishes  the proof.
\end{proof}

\subsection{Proof of Theorem \ref{ABNth1}} Theorem \ref{ABNth1} now easily follows from the combinatorial discrepancy estimate in Tusn\'ady's problem (Proposition \ref{ABNprop})  and the transference principle for general measures (Theorem \ref{ABNtransfer}) applied in the case $\mathcal G = \mathcal A$.  The only point that needs checking is whether $\mu$ satisfies  the approximability condition \eqref{ABNdecay} with respect to the collection $\mathcal A$ of axis-parallel boxes with one vertex at the origin. But \eqref{ABNdecay} follows trivially from the prior result \eqref{ABNdnmu}.\\

A slightly more direct way to prove \eqref{ABNdecay} would be to observe that the collection $\mathcal A$ is a VC class (its VC-dimension is $d$), see e.g. \cite{ABNchaz} for definitions. This implies that \cite{ABNVC} it is a uniform  Glivenko--Cantelli class, i.e. $$ \sup_\mu \mathbf E \sup_{A \in \mathcal{A}} \left| \frac{1}{N} \sum_{n=1}^N \mathds{1}_A (x_n) - \mu(A) \right|  \rightarrow 0 \,\, \textup{ as } \,\, N \rightarrow \infty,$$ where the expectation is taken over independent random points $x_1$,...,$x_N$  with distribution $\mu$, and the supremum is over all probability measures $\mu$ on $[0,1]^d$. This immediately yields \eqref{ABNdecay}.\\

In conclusion we want to make some remarks on possible algorithmic ways of finding a point set satisfying the conclusion of Theorem \ref{ABNth1}. Following the proof of the transference principle, two steps are necessary. First one has to find the $\ve$-approximation of $\mu$. The existence of such a point set is guaranteed in the proof as a consequence of \eqref{ABNdnmu}. However, this is not of much practical use since the point set for \eqref{ABNdnmu} cannot be constructed explicitly. However, in Corollary 1 of \cite{ABNai2} it is proved that a set of $2^{26} d \ve^{-2}$ random points which are sampled randomly and independently from the distribution $\mu$ will have, with positive probability, a discrepancy with respect to $\mu$ which is less than a given $\ve$. This result is deduced from large deviation inequalities, and thus for a larger value of the constant (say $2^{35}$ instead of $2^{26}$) the probability that the random point set has discrepancy at most $\ve$ with respect to $\mu$ will be extremely close to 1. We can think of $\ve$ as roughly $1/N$; for the cardinality of the random point set, this would give roughly $2^{35} d N^2$. Note, however, that in each iteration of the coloring procedure the number of points is halved; accordingly, starting with $2^{35} d N^2$ points leads to a point set of cardinality $N$ after a rather small number of steps. Admittedly, by using random points for the $\ve$-approximation the whole problem is only shifted rather than solved; it is typically rather difficult to draw independent random samples from a general multivariate distribution $\mu$.\\

The second part of the proof (the coloring procedure) is less of a problem from an algorithmic point of view, since in recent years much work has been done on algorithms for actually finding balanced colorings in combinatorial discrepancy theory. In particular, the recent bound~\cite{ABNBansalG16} for Tusn\'ady's problem due to Bansal and Garg mentioned in Section~\ref{ABNsect:comb-disc} is algorithmic in the following sense: they describe an efficient randomized algorithm that, given a set $V$ of $N$ points in $[0,1]^d$, finds a red-blue coloring of $V$ which has discrepancy at most $c_d(\log N)^d$ with probability arbitrarily close to $1$. ``Efficient'' here means that the running time of the algorithm is bounded by a polynomial in $N$. In another recent preprint, Levy, Ramadas and Rothvoss~\cite{ABNLevyRR16} describe an efficient deterministic algorithm that achieves the same guarantees as the randomized algorithm from~\cite{ABNBansalDG16} for the Koml\'os problem. Since the techniques used in~\cite{ABNBansalG16} are very closely related to those of~\cite{ABNBansalDG16}, it seems likely that an efficient deterministic algorithm to find colorings for Tusn\'ady's problem with discrepancy bounded by $c_d(\log N)^d$ can be constructed via the methods of~\cite{ABNLevyRR16}. Unfortunately, the stronger bound of $c_d(\log N)^{d-\frac12}$ proved in~\cite{ABNNikolov17} relies on an existential result of Banaszczyk~\cite{ABNBana13}, and no efficient algorithm is currently known that achieves this bound.

\section*{Acknowledgements}

This paper was conceived while the authors were taking a walk in the vicinity of Rodin's \emph{Gates of Hell} sculpture on Standford University campus during the MCQMC 2016 conference. We want to thank the MCQMC organizers for bringing us together. Based on this episode, we like to call the open problem stated in the first section of this paper the \emph{Gates of Hell Problem}.

\end{document}